\documentclass[11pt,a4paper]{article}
\usepackage[utf8]{inputenc}
\usepackage[T1]{fontenc}
\usepackage[english,french]{babel}
\addto\captionsfrench{%
}
\usepackage{xcolor}
\usepackage{amsmath}
\usepackage{amssymb}
\usepackage{amsfonts}
\usepackage{amsthm}
\usepackage{mathtools}
\usepackage{mathrsfs}
\usepackage{bm}
\usepackage{tensor}
\usepackage[a4paper,margin=2.7cm]{geometry}
\usepackage{microtype}
\usepackage{setspace}
\usepackage[
    colorlinks=true,
    linkcolor=blue!60!black,
    citecolor=blue!60!black,
    urlcolor=blue!60!black
]{hyperref}

\usepackage[nameinlink,noabbrev]{cleveref}
\usepackage{booktabs}
\usepackage{enumitem}
\usepackage{authblk}

\newtheorem{theorem}{Theorem}[section]

\newtheorem{corollary}[theorem]{Corollary}
\theoremstyle{definition}
\newtheorem{definition}[theorem]{Definition}
\newtheorem{remark}[theorem]{Remark}

\newcommand{\R}{\mathbb{R}}
\newcommand{\Lie}{\mathcal{L}}

\newcommand{\Ric}{\operatorname{Ric}}
\newcommand{\Scal}{\operatorname{Scal}}

\newcommand{\Hess}{\operatorname{Hess}}
\newcommand{\doi}[1]{%
    \href{https://doi.org/#1}{\texttt{#1}}%
}

\title{\textbf{Characterization of contact structures for the dual form of the vector field of an $h$-Ricci-Bourguignon soliton on the manifold $\mathbb{D}^2 \times \mathbb{R}$}}

\author[1]{Mafal Ndiaye Diop\thanks{E-mail: \texttt{mafaldiop95@gmail.com}}}
\author[1]{Abdou Bousso\thanks{E-mail: \texttt{abdoukskbousso@gmail.com}}}
\author[2]{Ameth Ndiaye\thanks{E-mail: \texttt{ameth1.ndiaye@ucad.edu.sn}}}

\affil[1]{Département de Mathématiques et Informatique, Université Cheikh Anta Diop de Dakar, Sénégal}
\affil[2]{Département de Mathématiques, FASTEF, Université Cheikh Anta Diop de Dakar, Sénégal}

\date{}

\begin{document}

\maketitle
\vspace{-0.5cm}
\renewcommand{\abstractname}{Abstract}
\begin{abstract}
We study Ricci-Bourguignon $h$-solitons on the product manifold $\mathbb{D}^2\times\mathbb{R}$, where $\mathbb{D}^2$ denotes the Poincaré disk equipped with its standard hyperbolic metric. We first show that this manifold admits a gradient Ricci-Bourguignon $h$-soliton only if the non-zero function $h$ depends solely on the manifold's last coordinate. Next, in the case where $h:=1$, we explicitly determine the vector field that makes $\mathbb{D}^2\times\mathbb{R}$ a Ricci-Bourguignon soliton. Finally, we establish the necessary and sufficient condition for the dual 1-form of this vector field to define a contact structure. This condition corresponds to the existence of a function which we explicitly determine that is independent of the manifold's second local coordinate, thereby allowing us to explicitly calculate its Reeb vector field $\xi$.
\end{abstract}

\vspace{0.3cm}

\noindent
\textbf{Keywords:}
Riemannian geometry;
Ricci-Bourguignon flow;
Ricci-Bourguignon $h$-soliton;
Poincaré disk;
product manifold, contact manifold.

\vspace{0.2cm}

\noindent
\textbf{MSC 2020 :}
53C21, 53C25, 53C44.

\section{Introduction}

The study of geometric flows and their self-similar solutions—initiated by Hamilton \cite{Hamilton} with the Ricci flow and pursued notably by Chow \cite{Chow1992} in the context of the Yamabe flow—occupies a central place in modern differential geometry. For the fundamentals of Riemannian geometry, connections, and curvature, we refer to the classic texts by Besse \cite{Besse}, do Carmo \cite{doCarmo}, and Lee \cite{Lee}. An $h$-Ricci-Bourguignon soliton, denoted by $(M,g,X,\lambda,\rho)$, is characterized by the equation
\begin{equation}
h\mathcal{L}_X g + 2\operatorname{Ric} = 2(\lambda + \rho \Scal) g,
\label{SRB}
\end{equation}
where $X$ is a vector field, $\operatorname{Ric}$ denotes the Ricci tensor, $\Scal$ is the scalar curvature, and $\lambda, \rho \in \mathbb{R}$ are real constants. When the vector field $X$ derives from a potential $f$ of class at least $\mathcal{C}^2$ (i.e., $X = \nabla f$), the soliton is called a gradient soliton, and the equation is then expressed in terms of the Hessian of $f$:
\begin{equation}
h\operatorname{Hess}(f) + \operatorname{Ric} = (\lambda + \rho \Scal) g.
\label{SRBG}
\end{equation}

In recent years, particular attention has been paid to the analysis of these geometric structures on various classes of spaces, drawing upon the topology of 3-manifolds and the geometry of product manifolds (see, for example, Thurston \cite{Thurston1997} and O'Neill \cite{ONeill}). In this context, the study of solitons on hyperbolic spaces and product manifolds such as the Poincaré half-plane \cite{BoussoNdiaye2026Hn}, general hyperbolic spaces \cite{BoussoNdiayeJDSGT2025}, the Lie group $\mathbb{H}^2 \times \mathbb{R}$ \cite{BoussoNdiayeH2R2025}, the group $\mathrm{Sol}_3$ \cite{BoussoNdiayeSol3}, or the Poincaré disk via $(h,\eta)$-extensions \cite{DiopBoussoNdiayeMandal2026} has highlighted the decisive influence of the underlying geometry on the admissibility and rigidity of the solutions.

Continuing this line of inquiry, we consider the product manifold $M = \mathbb{D}^2 \times \mathbb{R}$, where $\mathbb{D}^2$ denotes the Poincaré disk equipped with its standard hyperbolic metric. We first analyze the existence of gradient-type $h$-Ricci-Bourguignon solitons, demonstrating that this manifold admits such solitons only if the non-zero function $h$ depends solely on the manifold's last coordinate. Next, focusing on the canonical case where $h: = 1$, we explicitly determine the vector field that makes $\mathbb{D}^2 \times \mathbb{R}$ a Ricci-Bourguignon soliton. Finally, we explore the geometric properties of the dual 1-form associated with this vector field, establishing the necessary and sufficient condition for it to define a contact structure. This condition implies the existence of an explicitly determined function independent of the second local coordinate, enabling us to calculate its Reeb field $\xi$ exactly.
\section{Preliminaries}

Consider the Poincaré disk $\mathbb{D}^2
=
\left\{
(x,y)\in\mathbb{R}^2:
x^2+y^2<1
\right\}.$

In polar coordinates $\begin{cases}  x=r\cos\theta,\\
y=r\sin\theta,
\end{cases}$ where $r \in [0, 1)$ and $\theta \in [0, 2\pi)$, the standard hyperbolic metric of the disk is $ds^2=\frac{4}{(1-r^2)^2}\left(dr^2+r^2d\theta^2
\right).$

Thus, the product metric on $\mathbb{D}^2\times\mathbb{R}$ is: 
\begin{equation}
g=\frac{4}{(1-r^2)^2}
\left(
dr^2+r^2d\theta^2
\right)
+dt^2.
\label{eq:metric}
\end{equation}
We use the local coordinates $(x_1,x_2,x_3)=(r,\theta,t).$

Since the matrix $(g_{ij})$ is diagonal, its inverse is $(g^{ij})=\begin{pmatrix}
\frac{(1-r^2)^2}{4}&0&0\\
0&\frac{(1-r^2)^2}{4r^2}&0\\
0&0&1
\end{pmatrix}.$

Applying the formula for Christoffel symbols given by
\begin{equation}
\Gamma_{ij}^{k}
=
\frac12
\sum_{\ell=1}^{3}
g^{k\ell}
\left(
\frac{\partial g_{\ell i}}{\partial x^j}
+
\frac{\partial g_{\ell j}}{\partial x^i}
-
\frac{\partial g_{ij}}{\partial x^\ell}
\right).
\label{eq:christoffel}
\end{equation}

The only non-zero Christoffel symbols are therefore $$\Gamma_{11}^{1}=\frac{2r}{1-r^2},
\quad\Gamma_{22}^{1}=-\frac{r(1+r^2)}{1-r^2},
\quad\Gamma_{12}^{2}=\Gamma_{21}^{2}=\frac{1+r^2}{r(1-r^2)}.$$

Since the metric is a product $(g=ds^2+dt^2)$ and the $\mathbb{R}$ direction is flat, we have
\begin{equation}
(\Ric_{ij})
=
\begin{pmatrix}
-\dfrac{4}{(1-r^2)^2} & 0 & 0\\[2mm]
0 & -\dfrac{4r^2}{(1-r^2)^2} & 0\\[2mm]
0&0&0
\end{pmatrix}.
\label{eq:Ricci-matrix}
\end{equation}
This is because the Gaussian curvature of the Poincaré disk is equal to $-1$.

The scalar curvature is therefore equal to:
{\small$$\Scal=g^{11}\Ric_{11}+g^{22}\Ric_{22}+g^{33}\Ric_{33}=\frac{(1-r^2)^2}{4}\left(
-\frac{4}{(1-r^2)^2}\right)+\frac{(1-r^2)^2}{4r^2}\left(-\frac{4r^2}{(1-r^2)^2}\right)=-1-1=-2.$$}
Let us now consider a function $f:M\to\mathbb{R}$ on the product manifold $M = \mathbb{D}^2 \times \mathbb{R}$ that is at least of class $\mathcal{C}^2$. The components of the Hessian of $f$, denoted $\Hess(f)$, are defined by the formula in local coordinates:
\begin{equation}
(\Hess(f))_{ij}
=\nabla_i \partial_j f=\frac{\partial^2 f}{\partial x^i \partial x^j}-\Gamma_{ij}^{k} \frac{\partial f}{\partial x^k}.
\label{eq:hessian-def}
\end{equation}

Using the non-zero Christoffel symbols determined previously, let us write out each component $(\Hess(f))_{ij}$ : $$\begin{cases}
    (\Hess(f))_{11}
    =
    \frac{\partial^2 f}{\partial r^2}
    -
    \Gamma_{11}^{1} \frac{\partial f}{\partial r}
    =
    \frac{\partial^2 f}{\partial r^2}
    -
    \frac{2r}{1-r^2} \frac{\partial f}{\partial r},\\
     (\Hess(f))_{22}
    =
    \frac{\partial^2 f}{\partial \theta^2}
    -
    \Gamma_{22}^{1} \frac{\partial f}{\partial r}
    =
    \frac{\partial^2 f}{\partial \theta^2}
    +
    \frac{r(1+r^2)}{1-r^2} \frac{\partial f}{\partial r},\\
    (\Hess(f))_{33}
    =
    \frac{\partial^2 f}{\partial t^2},\\
     (\Hess(f))_{12}
    =
    \frac{\partial^2 f}{\partial r \partial \theta}
    -
    \Gamma_{12}^{2} \frac{\partial f}{\partial \theta}
    =
    \frac{\partial^2 f}{\partial r \partial \theta}
    -
    \frac{1+r^2}{r(1-r^2)} \frac{\partial f}{\partial \theta},\\
    (\Hess(f))_{13}
    =
    \frac{\partial^2 f}{\partial r \partial t},\\
    (\Hess(f))_{23}
    =
    \frac{\partial^2 f}{\partial \theta \partial t}.
\end{cases}$$

For a vector field
\begin{equation}\label{X}
    X(r,\theta,t)
=X_1(r,\theta,t)\frac{\partial}{\partial r}+X_2(r,\theta,t)\frac{\partial}{\partial\theta}+X_3(r,\theta,t)\frac{\partial}{\partial t}.
\end{equation} 
Using the local formula \begin{equation}
(\Lie_Xg)_{ij}=X^k\partial_kg_{ij}+g_{kj}\partial_iX^k+g_{ik}\partial_jX^k,
\label{eq:Lie}
\end{equation} the components of the Lie derivative are:

$$\begin{cases}
    (\Lie_Xg)_{11}
=
\frac{16r}{(1-r^2)^3}X_1
+
\frac{8}{(1-r^2)^2}
\frac{\partial X_1}{\partial r},\\
(\Lie_Xg)_{22}
=
\frac{8r(1+r^2)}{(1-r^2)^3}X_1
+
\frac{8r^2}{(1-r^2)^2}
\frac{\partial X_2}{\partial\theta},\\
(\Lie_Xg)_{33}
=
2\frac{\partial X_3}{\partial t},\\
(\Lie_Xg)_{12}
=
\frac{4r^2}{(1-r^2)^2}
\frac{\partial X_2}{\partial r}
+
\frac{4}{(1-r^2)^2}
\frac{\partial X_1}{\partial\theta},\\
(\Lie_Xg)_{13}
=
\frac{\partial X_3}{\partial r}
+
\frac{4}{(1-r^2)^2}
\frac{\partial X_1}{\partial t},\\
(\Lie_Xg)_{23}
=
\frac{\partial X_3}{\partial\theta}
+
\frac{4r^2}{(1-r^2)^2}
\frac{\partial X_2}{\partial t}.
\end{cases}$$
\begin{definition}
  Let $X$ be a smooth vector field on an $n$-dimensional Riemannian manifold $(M,g)$. Its dual form is given by the formula \begin{equation}\label{duale}
X^\flat(x_1,..,x_n)=\sum_{i=1}^n\sum_{j=1}^ng_{ij}X_idx_j.
   \end{equation}
\end{definition}
\begin{remark}
When $h = 1$ in equation \eqref{SRB} (respectively \eqref{SRBG}), the term used is simply Ricci-Bourguignon soliton (respectively gradient Ricci-Bourguignon soliton). Furthermore, if we simultaneously have $\rho = 0$, we obtain the classical notion of Ricci soliton (respectively gradient Ricci soliton). Moreover, an $h$-Ricci-Bourguignon soliton—just like a Ricci-Bourguignon or Ricci soliton is classified as:
\begin{itemize}[label=$\bullet$]
    \item \textbf{steady} if $\lambda = 0$;
    \item \textbf{shrinking} if $\lambda > 0$;
    \item \textbf{expanding} if $\lambda < 0$.
\end{itemize}
\end{remark}
\section{Main Results}
In this section, we present the results and their proofs under the assumption that $r \in (0, 1)$.
\begin{theorem}\label{T1}
Let $f$ and $h$ be two functions of at least class $\mathcal{C}^2(\mathbb{D}^2\times\R)$, with $h$ non-zero, and let $\lambda$ and $\rho$ be real constants. The quintuplet $(\mathbb{D}^2\times \R,g,\nabla f,\lambda,\rho)$ is a gradient $h$-Ricci-Bourguignon soliton if and only if the function $h$ depends at most on the variable $t$ and $\lambda=2\rho-1$. Furthermore, when setting $h(r,\theta,t):=\phi(t)$, where $\phi$ is a non-zero function of at least class $\mathcal{C}^2$ of a real variable, then \begin{equation}
    f(r,\theta,t)=-\int_{t_0}^t\left(\int_{\tau_0}^\tau\frac{\mathrm{d}s}{\phi(s)}\right)d\tau+B_0t+B_1
\end{equation} where $B_0$ and $B_1$ are real constants.
\end{theorem}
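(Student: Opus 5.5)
The plan is to insert the curvature data of the Preliminaries into the gradient equation \eqref{SRBG} and read it off component by component. Since $\Scal=-2$, and since the Ricci tensor \eqref{eq:Ricci-matrix} equals $-g$ on the disk block and $0$ in the $t$-direction, \eqref{SRBG} becomes
\begin{equation*}
h\,(\Hess f)_{ij}=(\lambda-2\rho)\,g_{ij}-\Ric_{ij}.
\end{equation*}
Writing $c:=\lambda-2\rho+1$ and $\mathbb{D}^2$-indices $i,j\in\{1,2\}$, this splits into four groups:
\begin{equation*}
h(\Hess f)_{ij}=c\,g_{ij},\qquad h\,\partial_t^2 f=\lambda-2\rho,\qquad h\,\partial_r\partial_t f=0,\qquad h\,\partial_\theta\partial_t f=0 .
\end{equation*}
Because $h$ never vanishes, the mixed equations give $\partial_r\partial_t f=\partial_\theta\partial_t f=0$. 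Hence $f(r,\theta,t)=F(r,\theta)+G(t)$, and the equations separate into a problem on $\mathbb{D}^2$ and one on $\R$.

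\textbf{Disk part.} On $\mathbb{D}^2$, $(\Hess F)_{ij}=\psi\,g_{ij}$ with $\psi:=c/h$. To exploit this I would use the Ricci identity in dimension $2$ with $K=-1$. It yields $(n-1)\nabla\psi=-\Ric(\nabla F)$ with $n=2$, so $\nabla(\psi-F)=0$ and $\psi=F+\mathrm{const}$. One can also check this directly in coordinates from the $(1,1)$, $(2,2)$ and $(1,2)$ components of the Hessian listed in the Preliminaries.

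\textbf{Real-line part.} Here $h\,G''(t)=\lambda-2\rho$.

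\textbf{Case analysis.}
\begin{itemize}[label=$\bullet$]
\item If $c=0$, i.e.\ $\lambda=2\rho-1$, then $\Hess F=0$ on the hyperbolic disk. A function with parallel gradient on a surface of curvature $-1$ must be constant, because $R(\cdot,\cdot)\nabla F=0$ forces $\nabla F=0$. So $F$ is constant. The $t$-equation reads $h\,G''=-1$, so $G''\neq0$ and $h=-1/G''(t)$ depends only on $t$. Setting $h=\phi(t)$ and integrating $G''=-1/\phi$ twice gives exactly the stated formula, with $B_0,B_1$ absorbing the integration constants and the constant $F$. Conversely, such $f$, $h$, $\lambda$ satisfy all six components, which gives the ``if'' direction.
\item If $c\neq0$, the goal is to reach a contradiction. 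If $\lambda\neq2\rho$, then $G''$ is never zero and $h=(\lambda-2\rho)/G''(t)$ depends on $t$ only. But $h=c/\psi$ depends on $(r,\theta)$ only, so $h$ and $\psi$ are constant. Then $\nabla F=\nabla\psi=0$, so $\Hess F=0=\psi g$, forcing $\psi=0$ and contradicting $c\neq0$.
\end{itemize}

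\textbf{Main obstacle.} The hard step is the remaining subcase $c\neq0$ with $\lambda=2\rho$. Then $G''=0$, $c=1$, and $h=1/\psi$ with $\psi=F+\mathrm{const}$. The equation $\Hess F=F\,g$ does have nonconstant solutions on the disk, for instance
\begin{equation*}
F=\frac{1+r^2}{1-r^2}=\cosh(\mathrm{dist}),
\end{equation*}
and then $h=1/F$ is non-zero and depends on $r$. I would first try to exclude this subcase. However, the direct check
\begin{equation*}
h\Hess f+\Ric=g_{\mathbb{D}^2}-g_{\mathbb{D}^2}=0=(\lambda+\rho\Scal)g
\end{equation*}
suggests it genuinely survives. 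So I expect the ``only if'' direction to hold only under an extra hypothesis: one excluding $\lambda=2\rho$, or requiring $h$ (or $f$) to be independent of $(r,\theta)$. In writing the proof I would state that hypothesis explicitly at this step.
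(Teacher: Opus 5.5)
Your analysis is correct, and it shows that the statement cannot be proved as written: its ``only if'' direction is false, so there is no way around your ``main obstacle''.

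Your example checks out directly against the paper's own system. Take $\lambda=2\rho$, $f=\frac{1+r^2}{1-r^2}+B_0t+B_1$ and $h=\frac{1-r^2}{1+r^2}$; both are smooth on the whole disk, and $h>0$. Then
\begin{equation*}
(\Hess f)_{11}=\frac{4(1+r^2)}{(1-r^2)^3}=\frac{4}{h(1-r^2)^2},\qquad (\Hess f)_{22}=\frac{4r^2(1+r^2)}{(1-r^2)^3}=\frac{4r^2}{h(1-r^2)^2},
\end{equation*}
and all other components vanish. So $(l_1)$--$(l_6)$ hold with $\lambda-2\rho+1=1$ and $\lambda-2\rho=0$, although $h$ depends on $r$ and $\lambda\neq 2\rho-1$. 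Taking $\rho=\lambda=0$ also gives a steady, not expanding, gradient Ricci $h$-soliton, so the corollary fails as well.

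The paper's proof reaches its conclusion through two unjustified steps:
\begin{itemize}
\item Differentiating $(l_3)$ in $r$ and $\theta$ actually gives $(\lambda-2\rho)\,\partial_r(1/h)=(\lambda-2\rho)\,\partial_\theta(1/h)=0$. This says nothing when $\lambda=2\rho$.
\item Differentiating $(l_1)$ and $(l_2)$ in $t$ gives $(\lambda-2\rho+1)\,\partial_t(1/h)=0$. This forces $\lambda=2\rho-1$ only if $h$ genuinely depends on $t$. It fails in your example. It also fails for constant $h$, where the conclusion is in fact true but this step does not establish it.
\end{itemize}

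As for the route, the two approaches differ as follows.
\begin{itemize}
\item The paper integrates $(l_4)$ in coordinates by variation of constants. It then uses $(l_2)$ to force $b$ to be constant and $f_3$ to cancel, but only after $\lambda=2\rho-1$ has already been imposed.
\item You reduce to $f=F(r,\theta)+G(t)$ and then apply the concircular identity $\nabla\psi=\nabla F$ for $\Hess F=\psi\,g_{\mathbb{D}^2}$. This treats all values of $\lambda-2\rho+1$ at once, which is exactly what exposes the surviving branch.
\end{itemize}

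Pushed one step further, your argument gives the correct classification. Either $\lambda=2\rho-1$, $h=\phi(t)$ and $f$ is as in the statement. Or $\lambda=2\rho$, $h=1/u$ and $f=u+B_0t+B_1$, where $u$ is a nowhere-vanishing solution of $\Hess u=u\,g_{\mathbb{D}^2}$.

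Two small points should be written out:
\begin{itemize}
\item Since $f$ is only $\mathcal{C}^2$, the Ricci identity needs justification. The relation $\partial_i\partial_jF=\psi g_{ij}+\Gamma^k_{ij}\partial_kF$, with $\psi=(\lambda-2\rho+1)/h\in\mathcal{C}^2$, bootstraps $F$ to $\mathcal{C}^3$, which makes the identity legitimate.
\item In the branch $\lambda\neq2\rho-1$, say explicitly that $\psi$ (hence $h$) is independent of $t$, because $\Hess F$ and $g_{\mathbb{D}^2}$ are.
\end{itemize}
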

\begin{proof}
Suppose that $(\mathbb{D}^2\times \R,g,\nabla f,\lambda,\rho)$ is a gradient Ricci-Bourguignon $h$-soliton. Then equation \eqref{SRBG} is equivalent to the system
    \begin{equation}
\begin{cases}
\displaystyle
\frac{\partial^2 f(r,\theta,t)}{\partial r^2}
-
\frac{2r}{1-r^2} \frac{\partial f(r,\theta,t)}{\partial r}
-\frac{4}{h(r,\theta,t)(1-r^2)^2}
= \frac{4(\lambda-2\rho)}{h(r,\theta,t)(1-r^2)^2},
\\
\displaystyle
\frac{\partial^2 f(r,\theta,t)}{\partial \theta^2}
+
\frac{r(1+r^2)}{1-r^2} \frac{\partial f(r,\theta,t)}{\partial r}
-\frac{4r^2}{h(r,\theta,t)(1-r^2)^2}
=\frac{4r^2(\lambda-2\rho)}{h(r,\theta,t)(1-r^2)^2},
\\
\displaystyle
\frac{\partial^2 f(r,\theta,t)}{\partial t^2}
=
\frac{\lambda-2\rho}{h(r,\theta,t)},
\\
\displaystyle
\frac{2\partial^2 f(r,\theta,t)}{\partial r \partial\theta} 
-
\frac{1+r^2}{r(1-r^2)} \frac{\partial f(r,\theta,t)}{\partial \theta}
=
0,
\\
\displaystyle
\frac{\partial^2 f(r,\theta,t)}{\partial r \partial t}
=
0,
\qquad
\frac{\partial^2 f(r,\theta,t)}{\partial \theta \partial t}
=
0.
\end{cases}
\label{e1}
\end{equation}
The system \eqref{e1} is equivalent to: $$\begin{cases}
\frac{\partial^2 f(r,\theta,t)}{\partial r^2}
-\frac{2r}{1-r^2} \frac{\partial f(r,\theta,t)}{\partial r}= \frac{4(\lambda-2\rho+1)}{h(r,\theta,t)(1-r^2)^2}\quad (l_1),
\\
\frac{\partial^2 f(r,\theta,t)}{\partial \theta^2}+\frac{r(1+r^2)}{1-r^2} \frac{\partial f(r,\theta,t)}{\partial r}
=\frac{4r^2(\lambda-2\rho+1)}{h(r,\theta,t)(1-r^2)^2}\quad (l_2),
\\
\frac{\partial^2 f(r,\theta,t)}{\partial t^2}
=\frac{\lambda-2\rho}{h(r,\theta,t)}\quad (l_3),
\\
\frac{2\partial^2 f(r,\theta,t)}{\partial r \partial\theta} 
-\frac{1+r^2}{r(1-r^2)} \frac{\partial f(r,\theta,t)}{\partial \theta}=0\quad (l_4),
\\
\frac{\partial^2 f(r,\theta,t)}{\partial r \partial t}=0\quad (l_5),\\

\frac{\partial^2 f(r,\theta,t)}{\partial \theta \partial t}=0\quad (l_6).
\end{cases}$$
Line $(l_6)$ shows that $f(r,\theta,t)=f_1(r,\theta)+f_2(r,t)$. Applying this expression for $f$ to line $(l_5)$ yields $$f(r,\theta,t)=f_1(r,\theta)+f_3(r)+f_4(t).$$Line $(l_4)$ gives us: $$\frac{\partial^2f_1(r,\theta)}{\partial_r\partial_\theta}-\frac{1+r^2}{r(1-r^2)}\frac{\partial f_1(r,\theta)}{\partial_\theta}=0\iff \frac{\partial}{\partial\theta}\left(\frac{\partial f_1(r,\theta)}{\partial_r}-\frac{1+r^2}{r(1-r^2)}f_1(r,\theta)\right)=0$$ This equation shows that there exists at most one function $a$ depending on the variable $r$ such that $$\frac{\partial f_1(r,\theta)}{\partial r}-\frac{1+r^2}{r(1-r^2)}f_1(r,\theta)=a(r)$$
The homogeneous equation is:
$$\frac{\partial f_{1ho}}{\partial r} - \frac{1+r^2}{r(1-r^2)} f_{1ho} =0\quad\text{so } :\quad \frac{df_{1ho}}{f_{1ho}} = \frac{1+r^2}{r(1-r^2)} dr$$

By decomposing the rational fraction, we find: $\frac{1+r^2}{r(1-r^2)} = \frac{1}{r} + \frac{2r}{1-r^2}$

By integrating in relation to $r$ :
$$\int \left( \frac{1}{r} + \frac{2r}{1-r^2} \right) dr = \ln|r| - \ln|1-r^2| = \ln\left| \frac{r}{1-r^2} \right|$$

Integrating the homogeneous equation yields:
$$\ln|f_{1ho}| = \ln\left| \frac{r}{1-r^2} \right| + b(\theta)$$

Applying the exponential function, the homogeneous solution is written as:
$$f_{1ho}(r,\theta) = b(\theta) \frac{r}{1-r^2}$$
where $b(\theta)$ is an arbitrary function of $\theta$. Using the method of variation of constants, we seek a solution of the form:
$$f_{1\text{part}}(r,\theta) = K(r,\theta) \frac{r}{1-r^2}$$

Substituting this into the full equation yields:
$$\frac{\partial K}{\partial r} \frac{r}{1-r^2} = a(r)\iff\frac{\partial K}{\partial r} = a(r) \frac{1-r^2}{r}.$$

Integrating with respect to $r$:
$$K(r,\theta) = \int a(r) \frac{1-r^2}{r} dr.$$
We find: 
$$f_1(r,\theta) = \left( b(\theta) + \int a(r) \frac{1-r^2}{r} dr \right) \frac{r}{1-r^2}.$$
The function $f$ thus becomes: $$f(r,\theta,t)=\left( b(\theta) + \int a(r) \frac{1-r^2}{r} dr \right) \frac{r}{1-r^2}+f_3(r)+f_4(t).$$ Differentiating the equation in line $(l_3)$ with respect to $r$ and then with respect to $\theta$ shows that $\begin{cases}
    \partial_r\left(\frac{1}{h(r,\theta,t)}\right)=0,\\
    \partial_\theta\left(\frac{1}{h(r,\theta,t)}\right)=0
\end{cases}$, implying that the function $h$ depends at most on the variable $t$.

Differentiating lines $(l_1)$ and $(l_2)$ with respect to $t$ simultaneously shows that $\lambda=2\rho-1$. Line $(l_2)$ becomes: $$\frac{\partial^2 f(r,\theta,t)}{\partial \theta^2}+\frac{r(1+r^2)}{1-r^2} \frac{\partial f(r,\theta,t)}{\partial r}
=0$$  with $$f(r,\theta,t)=\left( b(\theta) + \int a(r) \frac{1-r^2}{r} dr \right) \frac{r}{1-r^2}+f_3(r)+f_4(t).$$
Let $A(r)=\int a(r)\frac{1-r^2}{r}\,dr.\quad \text{Then } A'(r)=a(r)\frac{1-r^2}{r},$
so  
\begin{equation}
f(r,\theta,t)=\bigl(b(\theta)+A(r)\bigr)\frac{r}{1-r^2}+f_3(r)+f_4(t).
\label{eq:f-A}
\end{equation}

Since the functions $A$, $f_3$, and $f_4$ are independent of
$\theta$, we obtain
$$\frac{\partial f}{\partial\theta}
=
b'(\theta)\frac{r}{1-r^2},\quad \text{and consequently } \frac{\partial^2 f}{\partial\theta^2}
=b''(\theta)\frac{r}{1-r^2}.$$
We also have, on the other side,
\begin{align*}
\frac{\partial f}{\partial r}
&=
A'(r)\frac{r}{1-r^2}
+
\bigl(b(\theta)+A(r)\bigr)
\frac{d}{dr}
\left(\frac{r}{1-r^2}\right)
+
f_3'(r).
\end{align*}

while, $\left(\frac{r}{1-r^2}\right)'
=
\frac{1+r^2}{(1-r^2)^2}$ and like $A'(r)\frac{r}{1-r^2}
=
a(r).$
therefore, $\frac{\partial f}{\partial r}=a(r)+\bigl(b(\theta)+A(r)\bigr)\frac{1+r^2}{(1-r^2)^2}+f_3'(r).$

In the end, we obtain $$0=b''(\theta)\frac{r}{1-r^2}+\frac{r(1+r^2)}{1-r^2}\left[a(r)+
\bigl(b(\theta)+A(r)\bigr)\frac{1+r^2}{(1-r^2)^2}+f_3'(r)\right].$$

We find
\begin{equation}
b''(\theta)+\frac{(1+r^2)^2}{(1-r^2)^2}b(\theta)+\frac{(1+r^2)^2}{(1-r^2)^2}A(r)+(1+r^2)\bigl(a(r)+f_3'(r)\bigr)
=0.
\label{eq}
\end{equation}
In \eqref{eq}, the only term that depends solely on $\theta$ is $b''(\theta)$, whereas $b(\theta)$ is multiplied by the function $\frac{(1+r^2)^2}{(1-r^2)^2}$, which depends non-trivially on $r$.

For identity \eqref{eq} to hold for all $(r,\theta)$, the function $b$ must therefore be constant. Thus, there exists a constant $b_0 \in \mathbb{R}$ such that $b(\theta) = b_0 \in \R$.

Equation \eqref{eq} then becomes
\begin{equation}\label{eqq}
a(r)+f_3'(r)+\bigl(b_0+A(r)\bigr)\frac{1+r^2}{(1-r^2)^2}=0.
\end{equation}

Observing that
\begin{align*}
\left[
\bigl(b_0+A(r)\bigr)
\frac{r}{1-r^2}
\right]'
&=A'(r)\frac{r}{1-r^2}+\bigl(b_0+A(r)\bigr)
\frac{1+r^2}{(1-r^2)^2}
\\
&=a(r)+\bigl(b_0+A(r)\bigr)
\frac{1+r^2}{(1-r^2)^2}.
\end{align*}

Consequently, \eqref{eqq} can be rewritten in the form
$$\left[\bigl(b_0+A(r)\bigr)\frac{r}{1-r^2}
\right]'+f_3'(r)=0.$$
So,
 $$\left[\bigl(b_0+A(r)\bigr)\frac{r}{1-r^2}+f_3(r)\right]'=0.$$
There therefore exists a constant $C \in \R$ such that $\bigl(b_0+A(r)\bigr)\frac{r}{1-r^2}+f_3(r)=C.$

It follows that $f_3(r)=C-\bigl(b_0+A(r)\bigr)
\frac{r}{1-r^2}.$

Returning to the definition of $A$, we obtain explicitly
$$f_3(r)=C-\left(b_0+\int a(r)\frac{1-r^2}{r}\,dr\right)\frac{r}{1-r^2}.$$

This yields
$$f(r,\theta,t)=\left(b_0+\int a(r)\frac{1-r^2}{r}\,dr\right)\frac{r}{1-r^2}+C-\left(b_0+\int a(r)\frac{1-r^2}{r}\,dr\right)\frac{r}{1-r^2}+f_4(t).$$

The terms depending on $r$ cancel each other out exactly. We are thus left with $f(r,\theta,t)=C+f_4(t).$ By setting $h(r,\theta,t)=\phi(t)$, line $(l_3)$ becomes $$f(r,\theta,t)=C+(\lambda-   2\rho)\int_{t_0}^t\left(\int_{\tau_0}^\tau\frac{\mathrm{d}s}{\phi(s)}\right)d\tau+B_0t+C_1.$$
\end{proof}
\begin{corollary}
  Under the hypotheses of Theorem \ref{T1}, when $\rho=0$, the quadruple $(\mathbb{D}^2\times \R,g,\nabla f,\lambda)$ is an expanding gradient Ricci $h$-soliton. Furthermore, when $h:=1$, then \begin{equation}
        f(r,\theta,t)=\frac{-1}{2}t^2+x_0t+y_0
    \end{equation} where $x_0$ and $y_0$ are real constants.
\end{corollary}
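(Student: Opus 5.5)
This is a direct specialization of Theorem \ref{T1}, so I will read everything off that theorem rather than redo the system \eqref{e1}.

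\textbf{Step 1: the value of $\lambda$.} Setting $\rho=0$ in the gradient equation \eqref{SRBG} gives the gradient Ricci $h$-soliton equation $h\Hess(f)+\Ric=\lambda g$. Theorem \ref{T1} says a gradient soliton exists if and only if $h$ depends only on $t$ and $\lambda=2\rho-1$. With $\rho=0$ this forces $\lambda=-1<0$. By the classification in the Remark, the soliton is therefore expanding, and this holds for every admissible nonvanishing $h=\phi(t)$.

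\textbf{Step 2: the potential for $h=1$.} With $h:=1$ we have $\phi\equiv 1$, so the inner integral in the formula of Theorem \ref{T1} is $\int_{\tau_0}^\tau ds=\tau-\tau_0$. Integrating once more gives $\frac12(t-t_0)^2-\tau_0(t-t_0)$, which is $\frac12 t^2$ plus an affine function of $t$. I will absorb that affine part into the constants, producing $x_0$ and $y_0$. This yields $f=-\frac12 t^2+x_0t+y_0$.

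\textbf{Step 3: fixing the sign.} The statement of Theorem \ref{T1} writes the leading coefficient as $-1$, while the last line of its proof carries the factor $(\lambda-2\rho)$. These agree because $\lambda-2\rho=-1$, which is exactly the relation from Step 1. As a direct check, line $(l_3)$ with $h=1$ reads $\partial_t^2 f=\lambda-2\rho=-1$, and lines $(l_1)$–$(l_6)$ force $f$ to depend on $t$ alone. Integrating twice then gives the claimed quadratic.

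\textbf{Step 4: the converse.} I will verify that this $f$ actually is a soliton. Its Hessian has the single nonzero component $(\Hess f)_{33}=-1$. Using \eqref{eq:Ricci-matrix}, we get $\Hess f+\Ric=-g$, which matches $\lambda g$ with $\lambda=-1$. This confirms the equivalence.

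The only delicate point is the sign and constant bookkeeping between the statement of Theorem \ref{T1} and the last line of its proof. Resolving it amounts to substituting $\lambda-2\rho=-1$, as in Step 3.
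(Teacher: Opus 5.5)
Your route is the paper's own: the corollary has no separate proof and is read off Theorem~\ref{T1}. Your bookkeeping around it is fine, including the sign via $\lambda-2\rho=-1$ and the converse check $\Hess f+\Ric=-g$. One small slip: $\int_{t_0}^{t}(\tau-\tau_0)\,d\tau=\frac12(t^2-t_0^2)-\tau_0(t-t_0)$, not $\frac12(t-t_0)^2-\tau_0(t-t_0)$, but this only changes affine terms.

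The genuine gap is in Step~1, which uses the ``only if'' half of Theorem~\ref{T1}, and that half is false. The paper gets $\partial_r(1/h)=\partial_\theta(1/h)=0$ from $(l_3)$, i.e.\ from $h\,\partial_t^2 f=\lambda-2\rho$. That equation says nothing about $h$ when $\lambda=2\rho$. Here is a counterexample. Take $f=\frac{1+r^2}{1-r^2}$, the $\cosh$ of the hyperbolic distance to the origin. Then $f'=\frac{4r}{(1-r^2)^2}$, and
\[
(\Hess f)_{11}=f''-\frac{2r}{1-r^2}f'=\frac{4(1+r^2)}{(1-r^2)^3}=f\,g_{11},\qquad (\Hess f)_{22}=\frac{r(1+r^2)}{1-r^2}f'=f\,g_{22},
\]
so $\Hess f=f\,g_{\mathbb{D}^2}$. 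With the smooth, nonvanishing $h=\frac{1-r^2}{1+r^2}$ this gives $h\Hess f+\Ric=g_{\mathbb{D}^2}-g_{\mathbb{D}^2}=0$. That is a \emph{steady} gradient Ricci $h$-soliton ($\rho=\lambda=0$) whose $h$ depends on $r$. So ``$\rho=0$ forces $\lambda=-1$'' holds only if you \emph{assume} $h=\phi(t)$. It cannot be derived from the soliton equation, and without that assumption the first sentence of the corollary is false.

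For $h:=1$ the claim is true, but the proof of Theorem~\ref{T1} does not reach it. The paper gets $\lambda=2\rho-1$ by differentiating $(l_1)$ and $(l_2)$ in $t$, which gives $0=0$ when $h$ is constant. So your Step~3 assertion that the system forces $f=f(t)$ is unsupported as written. A short argument closes it:
\begin{itemize}
\item $(l_5)$ and $(l_6)$ give $f=F(r,\theta)+G(t)$, and $(l_3)$ gives $G''=\lambda-2\rho$.
\item The $\mathbb{D}^2$-block of \eqref{SRBG} reads $\Hess_{\mathbb{D}^2}F=\mu\,g_{\mathbb{D}^2}$, with the constant $\mu=\lambda-2\rho+1$.
\item Then $\nabla_Y\nabla F=\mu Y$, so $R(X,Y)\nabla F=\mu\,(\nabla_XY-\nabla_YX-[X,Y])=0$.
\item On a surface of curvature $-1$, however, $R(\nabla F,Y)\nabla F$ is a nonzero multiple of $Y$ whenever $\nabla F\neq0$ and $0\neq Y\perp\nabla F$.
\end{itemize}
Hence $\nabla F=0$, so $\mu=0$, $G''=-1$, and $f=-\frac12t^2+x_0t+y_0$. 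The same argument works for any constant $h$. Combined with your Step~4, this proves the second claim.
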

\begin{theorem}\label{T2}
Consider the vector field given in \eqref{X}, such that its components are at least of class $\mathcal{C}^1$. The quintuplet $(\mathbb{D}^2\times \R,g,X,\lambda,\rho)$ is a Ricci-Bourguignon soliton if and only if \begin{equation}\label{champ}
    \begin{cases}
    \lambda=2\rho-1\\
    X(r,\theta,t)=\left(c\theta+c_0\right)(1-r^2)\partial_r+\left(\frac{c}{r}+c_1\right)\partial_\theta+\left(-t+b_0\right)\partial_t
\end{cases}
\end{equation} where $b_0, c, c_0, c_1$ are real constants.    
\end{theorem}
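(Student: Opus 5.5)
The plan is to specialise \eqref{SRB} to $h=1$ and $\Scal=-2$, and to write it componentwise using the Lie derivative formulas of the Preliminaries and the Ricci matrix \eqref{eq:Ricci-matrix}. Set $\mu:=\lambda-2\rho+1$. After dividing by the metric factors, the six equations are:
\begin{align*}
&(E_{11})\quad \partial_r X_1+\tfrac{2r}{1-r^2}X_1=\mu, \qquad
(E_{22})\quad \tfrac{1+r^2}{r(1-r^2)}X_1+\partial_\theta X_2=\mu, \qquad
(E_{33})\quad \partial_t X_3=\mu-1,\\
&(E_{12})\quad r^2\partial_r X_2+\partial_\theta X_1=0, \qquad
(E_{13})\quad (1-r^2)^2\partial_r X_3+4\partial_t X_1=0, \qquad
(E_{23})\quad (1-r^2)^2\partial_\theta X_3+4r^2\partial_t X_2=0.
\end{align*}
The strategy mirrors the proof of Theorem \ref{T1}: integrate the diagonal equations first, then feed the results into the mixed ones.

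\textbf{Step 1: integrate the diagonal equations.} Equation $(E_{11})$ is a linear ODE in $r$. Since $\partial_r\bigl(X_1/(1-r^2)\bigr)=\mu/(1-r^2)$, it gives
\[
X_1=(1-r^2)\bigl(\mu\operatorname{artanh} r+\alpha(\theta,t)\bigr).
\]
Equation $(E_{33})$ gives $X_3=(\mu-1)t+\beta(r,\theta)$.

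\textbf{Step 2: decouple the $t$-direction.} Substitute into $(E_{13})$ and $(E_{23})$. Differentiating these in $t$ shows that $\partial_t^2\alpha=0$ and $\partial_t^2X_2=0$. Differentiating once more in $r$ and $\theta$, and using $(E_{12})$, should force $\alpha$ and $X_2$ to be independent of $t$ and $\beta$ to be constant. This yields $X_3=(\mu-1)t+b_0$.

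\textbf{Step 3: show $\mu=0$.} Solve $(E_{22})$ for $\partial_\theta X_2$ and $(E_{12})$ for $\partial_r X_2$, then impose the compatibility $\partial_\theta\partial_r X_2=\partial_r\partial_\theta X_2$. This gives an equation in $\alpha(\theta)$ and $\mu$. In it, the $\mu\operatorname{artanh}r$ term has an $r$-dependence that no function of $\theta$ can compensate, which forces $\mu=0$. Geometrically, this reflects the fact that $\mathbb{D}^2$ carries no proper homothetic field. With $\mu=0$ we get $\lambda=2\rho-1$ and $X_3=-t+b_0$, as in \eqref{champ}.

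\textbf{Step 4: the remaining planar system, the main obstacle.} With $\mu=0$, one is left with
\[
X_1=(1-r^2)\alpha(\theta), \qquad \partial_\theta X_2=-\tfrac{1+r^2}{r}\alpha(\theta), \qquad r^2\partial_r X_2=-(1-r^2)\alpha'(\theta).
\]
The compatibility condition is a second-order ODE for $\alpha$. It must be solved and then $X_2$ integrated explicitly; I expect this to be the delicate point.

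I would first test the announced form $\alpha=c\theta+c_0$, $X_2=c/r+c_1$ directly in these two equations. A quick check suggests that $(E_{22})$ and $(E_{12})$ are satisfied only when $c=c_0=0$. If that is confirmed, the solution set reduces to $c_1\partial_\theta+(-t+b_0)\partial_t$, and the form of $X$ in \eqref{champ} would need to be corrected accordingly. For this reason I would carry out the compatibility computation in full rather than rely on the ansatz.

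The converse direction is a direct substitution of the final field into the six equations.
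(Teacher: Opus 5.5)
Your direct check of the announced family is correct, and it is decisive. With $\mu=0$, $(E_{22})$ becomes $\frac{1+r^2}{r}(c\theta+c_0)=0$ and $(E_{12})$ becomes $-cr^2=0$. So the fields in \eqref{champ} are solitons only for $c=c_0=0$, and the theorem as stated cannot be proved. The paper's proof goes wrong in two places. First, when integrating $(\ell_4)$ it drops the factor $(1-r^2)$ of $\partial_\theta X_1$, writing $X_2=\partial_\theta w/r+w_1$ instead of $X_2=(r^{-1}+r)\,\partial_\theta w+w_1$. Second, in its last step it concludes $w_4=c\theta+c_0$. But its own equation, once $a_0=0$ and $\lambda-2\rho+1=0$, reads $w_4''+r\,w_6'+(1+r^2)w_4=0$, which forces $w_4=0$.

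The genuine gap in your proposal is the repair you suggest. Knowing that this particular ansatz survives only for $c=c_0=0$ does not show that the solution set reduces to $c_1\partial_\theta+(-t+b_0)\partial_t$. You have to actually solve the Step 4 system, which you leave undone, and it has more solutions. From $(E_{12})$, $X_2=\alpha'(\theta)\,\frac{1+r^2}{r}+\gamma(\theta)$. Inserting this into $(E_{22})$ gives $(1+r^2)(\alpha''+\alpha)+r\gamma'=0$ for all $r$, hence $\alpha''+\alpha=0$ and $\gamma=c_1$. The correct classification is therefore $\lambda=2\rho-1$ and
\[
X=(1-r^2)(p\cos\theta+q\sin\theta)\,\partial_r+\Bigl[(q\cos\theta-p\sin\theta)\,\frac{1+r^2}{r}+c_1\Bigr]\partial_\theta+(-t+b_0)\,\partial_t,\qquad p,q,c_1,b_0\in\R,
\]
that is, $-t\,\partial_t$ plus an arbitrary Killing field of $\mathbb{D}^2\times\R$. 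The $p,q$ terms are the hyperbolic translations of the disk; for $p=1$, $q=0$ this is the field $\dot z=1-z^2$ with $z=re^{i\theta}$. Your guess omits them, so the original statement fails in both directions, not only in the ``if'' direction.

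A minor point concerns Step 2. It cannot give that $\beta$ is constant from $(E_{13})$, $(E_{23})$, $(E_{12})$ alone. Indeed, $\alpha=a_0t+\alpha_0(\theta)$ and $\beta=-4a_0\operatorname{artanh}r+b_0$, with a suitable $X_2$, satisfy all three. The condition $a_0=0$ comes only from the $t$-dependent part of $(E_{22})$, for instance inside your Step 3 compatibility condition.
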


\begin{proof}
   Suppose that $(\mathbb{D}^2\times \R,g,X,\lambda,\rho)$ is a Ricci-Bourguignon soliton. Then the equation \eqref{SRB} is equivalent to the system $$\begin{cases}
\displaystyle
\frac{\partial X_1}{\partial r}
+\frac{2r}{1-r^2}X_1
=\lambda-2\rho+1\quad(\ell_1),
\\\\
\displaystyle
\frac{\partial X_2}{\partial\theta}
+\frac{1+r^2}{r(1-r^2)}X_1
=\lambda-2\rho+1\quad(\ell_2),
\\\\
\displaystyle
\frac{\partial X_3}{\partial t}
=\lambda-2\rho\quad(\ell_3),\\\\
\displaystyle
\frac{\partial X_1}{\partial\theta}
+r^2\frac{\partial X_2}{\partial r}=0\quad (\ell_4),\\\\
\displaystyle
\frac{\partial X_3}{\partial r}
+\frac{4}{(1-r^2)^2}
\frac{\partial X_1}{\partial t}=0\quad (\ell_5),\\\\
\displaystyle
\frac{\partial X_3}{\partial\theta}
+\frac{4r^2}{(1-r^2)^2}
\frac{\partial X_2}{\partial t}=0\quad (\ell_6).
\end{cases}$$
The homogeneous solution to the equation for the line $(\ell_1)$ is $X_1^{ho}(r,\theta,t)=w(\theta,t)(1-r^2)$.

Let us set $X_1^{\text{part}}(r,\theta,t)=K(r,\theta,t)(1-r^2)$ as a particular solution. Then $$\partial_rK=\frac{\lambda-2\rho+1}{1-r^2}\iff \partial_rK=\frac{\lambda-2\rho+1}{2}\left(\frac{1}{1+r}+\frac{1}{1-r}\right)$$ which yields $$X_1(r,\theta,t)=\left(w(\theta,t)+\frac{\lambda-2\rho+1}{2}\ln\left(\frac{1+r}{1-r}\right)\right)(1-r^2).$$ 
Line $(\ell_4)$ implies that $$X_2(r,\theta,t)=\frac{\partial_\theta w(\theta,t)}{r}+w_1(\theta,t).$$
Line $(\ell_3)$ yields $$X_3(r,\theta,t)=(\lambda-2\rho)t+w_2(r,\theta).$$
Line $(\ell_5)$ gives us $$\partial_rw_2(r,\theta)+\frac{4\partial_tw(\theta,t)}{1-r^2}=0;$$ this equation holds only when $w(\theta,t)=w_3(\theta)t+w_4(\theta)$ so $$w_2(r,\theta)=-2w_3(\theta)\ln\left(\frac{1+r}{1-r}\right)+w_5(\theta).$$
Which once again results in  $$\begin{cases}
X_1(r,\theta,t)=\left(w_3(\theta)t+w_4(\theta)+\frac{\lambda-2\rho+1}{2}\ln\left(\frac{1+r}{1-r}\right)\right)(1-r^2),\\
X_2(r,\theta,t)=\frac{tw'_3(\theta)+w'_4(\theta)}{r}+w_1(\theta,t),\\
X_3(r,\theta,t)=(\lambda-2\rho)t-2w_3(\theta)\ln\left(\frac{1+r}{1-r}\right)+w_5(\theta).
\end{cases}$$
The line $(\ell_6)$ gives $$-2w'_3(\theta)\ln\left(\frac{1+r}{1-r}\right)+w'_5(\theta)+\frac{4r}{(1-r^2)^2}w'_3(\theta)+\frac{4r^2}{(1-r^2)^2}\partial_tw_1(\theta,t)=0$$ this equation is true only if $w_1(\theta,t)=w_6(\theta)$, $w_3(\theta)=a_0\in\R$ and $w_5(\theta)=b_0\in\R$
we have: $$\begin{cases}
X_1(r,\theta,t)=\left(a_0t+w_4(\theta)+\frac{\lambda-2\rho+1}{2}\ln\left(\frac{1+r}{1-r}\right)\right)(1-r^2),\\
X_2(r,\theta,t)=\frac{w'_4(\theta)}{r}+w_6(\theta),\\
X_3(r,\theta,t)=(\lambda-2\rho)t-2a_0\ln\left(\frac{1+r}{1-r}\right)+b_0.
\end{cases}.$$
The line $(\ell_2)$ becomes $$\frac{\partial X_2}{\partial\theta}
+\frac{1+r^2}{r(1-r^2)}X_1
=\lambda-2\rho+1$$ $$\frac{w''_4(\theta)}{r}+w'_6(\theta)+\frac{1+r^2}{r}\left(a_0t+w_4(\theta)+\frac{\lambda-2\rho+1}{2}\ln\left(\frac{1+r}{1-r}\right)\right)=\lambda-2\rho+1.$$  This equation remains true only when $$\begin{cases}
    a_0=0,\\
    w_4(\theta)=c\theta+c_0,\\
    w_6(\theta)=c_1\in\R,\\
    \lambda-2\rho+1=0.
\end{cases}$$
Finaly $$\begin{cases}
X_1(r,\theta,t)=\left(c\theta+c_0\right)(1-r^2),\\
X_2(r,\theta,t)=\frac{c}{r}+c_1,\\
X_3(r,\theta,t)=(\lambda-2\rho)t+b_0.
\end{cases}.$$ 
\end{proof}
\begin{corollary}
    Under the hypotheses of Theorem \ref{T2}, when $\rho=0$, the quadruple $(\mathbb{D}^2\times \R,g,X,\lambda)$ is an expanding Ricci soliton.
\end{corollary}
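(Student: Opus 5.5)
The corollary follows directly from Theorem \ref{T2}, so the plan is to specialize that result to $\rho=0$ and then read off the sign of $\lambda$.

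\textbf{Step 1 (specialization).} With $\rho=0$, equation \eqref{SRB} with $h=1$ becomes $\Lie_X g+2\Ric=2\lambda g$. This is exactly the classical Ricci soliton equation, as noted in the Remark following the definition of the dual form. Hence $(\mathbb{D}^2\times\R,g,X,\lambda)$ is a Ricci soliton if and only if the quintuplet $(\mathbb{D}^2\times\R,g,X,\lambda,0)$ is a Ricci-Bourguignon soliton.

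\textbf{Step 2 (sign of $\lambda$).} Theorem \ref{T2} gives the compatibility condition $\lambda=2\rho-1$. Substituting $\rho=0$ yields $\lambda=-1<0$. By the classification in the Remark, a negative $\lambda$ means the soliton is expanding.

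\textbf{Step 3 (the vector field).} The same theorem also says that the admissible vector fields are precisely those in \eqref{champ}, with $-t+b_0$ as the $\partial_t$-component. Since $\lambda-2\rho=-1$, this component is already correct at $\rho=0$, so no recomputation is needed.

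\textbf{Step 4 (existence).} To make the statement non-vacuous, we should check that such an $X$ exists. The simplest choice is $X=(-t+b_0)\partial_t$, taking $c=c_0=c_1=0$. In that case $\Lie_X g=-2\,dt^2$ and $2\Ric=-2\,ds^2$, so their sum is $-2g=2\lambda g$ with $\lambda=-1$, as required.

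\textbf{Main obstacle.} There is essentially no hard step. The only point needing care is that the hypotheses of Theorem \ref{T2} (components of class $\mathcal{C}^1$ and $r\in(0,1)$) are carried over unchanged, so that the theorem applies verbatim.
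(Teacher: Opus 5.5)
Your argument is correct and matches the paper, which gives no separate proof: the corollary is just the relation $\lambda=2\rho-1$ from Theorem \ref{T2} evaluated at $\rho=0$, giving $\lambda=-1<0$, hence expanding. One caution: Step 3 is unnecessary, and the claim it borrows from Theorem \ref{T2} that the admissible fields are ``precisely those in \eqref{champ}'' is false whenever $c\neq 0$ or $c_0\neq 0$ (for instance $c_0(1-r^2)\partial_r+(-t+b_0)\partial_t$ violates $(\ell_2)$), so it is good that your Step 4 existence check uses $c=c_0=c_1=0$ and verifies the soliton equation directly.
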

\begin{theorem}
 The dual form of the vector field given by \eqref{champ} in Theorem \ref{T2} is a contact structure if and only if the function $$(r,\theta,t)\mapsto (-t+b_0)\left( \frac{4c r^6 - 8c r^4 - 16c_1 r^3 - 16c r^2 + 4c}{r^2(1-r^2)^3} \right)\quad \text{is non-zero} .$$  Furthermore, its Reeb $\xi$ field is equal to $\xi(r,\theta,t) = \frac{1}{-t + b_0} \frac{\partial}{\partial t}.$
\end{theorem}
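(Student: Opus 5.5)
The plan is a direct computation in the coordinates $(r,\theta,t)$, using the explicit field \eqref{champ} from Theorem~\ref{T2} and the diagonal metric \eqref{eq:metric}. First I will write down the dual form via \eqref{duale}. Since $g$ is diagonal, $\eta:=X^\flat=g_{11}X_1\,dr+g_{22}X_2\,d\theta+g_{33}X_3\,dt$. This gives
\begin{equation*}
\eta=\frac{4(c\theta+c_0)}{1-r^2}\,dr+\frac{4(cr+c_1r^2)}{(1-r^2)^2}\,d\theta+(-t+b_0)\,dt=:P\,dr+Q\,d\theta+R\,dt .
\end{equation*}
The key structural observation is the dependence of the coefficients: $P$ depends only on $(r,\theta)$, $Q$ only on $r$, and $R$ only on $t$.

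Next I will compute the exterior derivative. Because $\partial_tP=\partial_tQ=0$ and $\partial_rR=\partial_\theta R=0$, all terms containing $dt$ drop out, and
\begin{equation*}
d\eta=\left(\partial_rQ-\partial_\theta P\right)dr\wedge d\theta .
\end{equation*}
Hence
\begin{equation*}
\eta\wedge d\eta=R\left(\partial_rQ-\partial_\theta P\right)dr\wedge d\theta\wedge dt .
\end{equation*}
Since $M$ is $3$-dimensional, $\eta$ is a contact form if and only if this coefficient vanishes nowhere. What remains is to compute $\partial_rQ$ by the quotient rule and $\partial_\theta P=\frac{4c}{1-r^2}$. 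I will then put everything over a common denominator built from powers of $(1-r^2)$, and of $r$ to match the normalization used in the statement. This should produce the rational function displayed in the theorem, multiplied by $(-t+b_0)$, and so give the stated criterion.

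I expect this algebraic simplification to be the only real obstacle. The numerator must be expanded carefully, and I anticipate that the exact polynomial form (including the factor $r^2$ in the denominator) may need rechecking against the claimed expression. My first step would be to verify it by evaluating both sides at a sample value of $r$.

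Finally, for the Reeb field I will look for $\xi$ satisfying $\eta(\xi)=1$ and $\iota_\xi d\eta=0$. Since $d\eta$ involves only $dr\wedge d\theta$, any multiple of $\partial_t$ is annihilated by $\iota_\xi d\eta$. Conversely, the condition $\iota_\xi d\eta=0$ forces the $\partial_r$- and $\partial_\theta$-components of $\xi$ to vanish whenever $\partial_rQ-\partial_\theta P\neq0$, which holds under the contact hypothesis. So $\xi=\varphi\,\partial_t$, and then $\eta(\xi)=\varphi R=1$ gives $\xi=\frac{1}{-t+b_0}\partial_t$. This is well defined precisely because the contact condition forces $-t+b_0\neq0$.
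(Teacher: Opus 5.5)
Your route is the same as the paper's: compute $X^\flat$ from the diagonal metric, note that $dX^\flat$ is a multiple of $dr\wedge d\theta$, wedge with the $dt$-term, then impose $\eta(\xi)=1$ and $\iota_\xi d\eta=0$. Your ingredients are correct. $Q=\frac{4(cr+c_1r^2)}{(1-r^2)^2}$ is the right $d\theta$-coefficient, and $dt\wedge dr\wedge d\theta=dr\wedge d\theta\wedge dt$ is the right reordering.

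The gap is the step you flag and then assert anyway, namely that the algebra ``should produce the rational function displayed in the theorem''. It does not. Carrying it out gives
\[
\partial_rQ-\partial_\theta P=\frac{4c+8c_1r+12cr^2+8c_1r^3}{(1-r^2)^3}-\frac{4c}{1-r^2}=\frac{4r\bigl(2c_1(1+r^2)+cr(5-r^2)\bigr)}{(1-r^2)^3},
\]
so $\eta\wedge d\eta=(b_0-t)\,\frac{4r(2c_1(1+r^2)+cr(5-r^2))}{(1-r^2)^3}\,dr\wedge d\theta\wedge dt$, with no $r^2$ in the denominator. Your proposed sample check exposes this at once. For $c=1$, $c_1=0$, $r=\tfrac12$, the coefficient of $b_0-t$ is $\tfrac{304}{27}$, whereas the displayed rational function gives $-\tfrac{112}{27}$.

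The displayed expression comes from the paper's proof writing the $d\theta$-coefficient as $\frac{4(c_1r+c)}{r(1-r^2)^2}$. That is, it multiplies $X_2$ by $g_{11}$ instead of $g_{22}=\frac{4r^2}{(1-r^2)^2}$. Its reordering $dr\wedge d\theta\wedge dt=-dt\wedge dr\wedge d\theta$ is also wrong, though only by a harmless sign. So no careful expansion can close this step. What your computation actually proves is the criterion with the corrected function.

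You should also pin down what ``non-zero'' means, because it decides whether the statement has any content. Under your (correct) requirement that $\eta\wedge d\eta$ vanish nowhere, the factor $b_0-t$ vanishes on the slice $\{t=b_0\}$ for every choice of constants. Hence $X^\flat$ is never a contact form on all of $\mathbb{D}^2\times\R$, and the stated equivalence holds only because both sides always fail. The Reeb field $\frac{1}{-t+b_0}\partial_t$ is also undefined there.

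To get content you must work on open sets such as $\{t\neq b_0\}$, i.e.\ pointwise. In that reading the displayed function is genuinely wrong, not merely differently normalized. For $c\neq0$, $c_1=0$, its numerator $4c(r^6-2r^4-4r^2+1)$ vanishes at some $r_*\in(0,1)$, while the true coefficient $\frac{4cr^2(5-r^2)}{(1-r^2)^3}$ never does. Only under the weak reading ``not identically zero'' do the two criteria coincide, both reducing to $(c,c_1)\neq(0,0)$.

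Your Reeb-field argument is fine and agrees with the paper's. It is unaffected by the error, since it only uses that $d\eta$ is a nonvanishing multiple of $dr\wedge d\theta$ and that the $dt$-coefficient is $b_0-t$.
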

\begin{proof}
According to formula \eqref{duale}, we have: $$ X^\flat(r,\theta,t)=\frac{4\left(c\theta+c_0\right)}{1-r^2}\mathrm{d}r+\frac{4(c_1r+c)}{r(1-r^2)^2}\mathrm{d}\theta+\left(-t+b_0\right)\mathrm{d}t.$$
 Let's set  : $A(r,\theta) = \frac{4(c\theta+c_0)}{1-r^2}, \quad B(r) = \frac{4(c_1r+c)}{r(1-r^2)^2}, \quad C(t) = -t+b_0.$

So, $X^\flat=A\mathrm{d}r+B\mathrm{d}\theta + C\mathrm{d}t$.

By the definition of the exterior derivative:
$$\mathrm{d}X^\flat = \mathrm{d}A \wedge \mathrm{d}r + \mathrm{d}B \wedge \mathrm{d}\theta + \mathrm{d}C \wedge \mathrm{d}t$$
So we have: $$\mathrm{d}A=\frac{\partial A}{\partial r}\mathrm{d}r + \frac{\partial A}{\partial \theta}\mathrm{d}\theta+\frac{\partial A}{\partial t}\mathrm{d}t = \frac{8r(c\theta+c_0)}{(1-r^2)^2}\mathrm{d}r + \frac{4c}{1-r^2}\mathrm{d}\theta+0\mathrm{d}t$$
  By applying the outer product with $\mathrm{d}r$ :
    $$\mathrm{d}A \wedge \mathrm{d}r = \frac{4c}{1-r^2}\mathrm{d}\theta \wedge \mathrm{d}r = -\frac{4c}{1-r^2}\mathrm{d}r \wedge \mathrm{d}\theta.$$
Since $B$ depends only on $r$, $\mathrm{d}B = \frac{\partial B}{\partial r}\mathrm{d}r$. By the quotient rule:
    $$\frac{\partial B}{\partial r} = \frac{16c_1 r^3 + 20c r^2 - 4c}{r^2(1-r^2)^3}$$
  Hence:
    $$\mathrm{d}B \wedge \mathrm{d}\theta = \frac{16c_1 r^3 + 20c r^2 - 4c}{r^2(1-r^2)^3}\mathrm{d}r \wedge \mathrm{d}\theta.$$
     Since $C$ depends only on $t$, $\mathrm{d}C = -\mathrm{d}t$, which yields $\mathrm{d}C \wedge \mathrm{d}t=-\mathrm{d}t\wedge \mathrm{d}t = 0$.

By gathering the terms for $\mathrm{d}X^\flat$ :
$$\mathrm{d}X^\flat = \left( -\frac{4c}{1-r^2} + \frac{16c_1 r^3 + 20c r^2 - 4c}{r^2(1-r^2)^3} \right)\mathrm{d}r \wedge \mathrm{d}\theta.$$

Which gives:
$$\mathrm{d}X^\flat = \frac{-4c r^6 + 8c r^4 + 16c_1 r^3 + 16c r^2 - 4c}{r^2(1-r^2)^3}\mathrm{d}r \wedge \mathrm{d}\theta.$$

We now have:
$$X^\flat \wedge \mathrm{d}X^\flat = \left( A\,\mathrm{d}r + B\,\mathrm{d}\theta + C\,\mathrm{d}t \right) \wedge \left( \frac{-4c r^6 + 8c r^4 + 16c_1 r^3 + 16c r^2 - 4c}{r^2(1-r^2)^3}\mathrm{d}r \wedge \mathrm{d}\theta \right)$$
Upon expansion, the terms containing $\mathrm{d}r \wedge \mathrm{d}r$ or $\mathrm{d}\theta \wedge \mathrm{d}r \wedge \mathrm{d}\theta$ vanish due to antisymmetry. Only the term in $\mathrm{d}t$ remains:
$$X^\flat \wedge \mathrm{d}X^\flat = C(t) \cdot \left( \frac{-4c r^6 + 8c r^4 + 16c_1 r^3 + 16c r^2 - 4c}{r^2(1-r^2)^3} \right) \mathrm{d}t \wedge \mathrm{d}r \wedge \mathrm{d}\theta$$
By reordering the differentials into the standard order $(\mathrm{d}r \wedge \mathrm{d}\theta \wedge \mathrm{d}t = -\mathrm{d}t \wedge \mathrm{d}r \wedge \mathrm{d}\theta)$ :
$$X^\flat \wedge \mathrm{d}X^\flat = \left[ (-t+b_0)\left( \frac{4c r^6 - 8c r^4 - 16c_1 r^3 - 16c r^2 + 4c}{r^2(1-r^2)^3} \right)\right] \mathrm{d}r \wedge \mathrm{d}\theta \wedge \mathrm{d}t$$

For $X^\flat \wedge \mathrm{d}X^\flat \neq 0$, it is necessary and sufficient that the function $$(r,\theta,t)\mapsto (-t+b_0)\left( \frac{4c r^6 - 8c r^4 - 16c_1 r^3 - 16c r^2 + 4c}{r^2(1-r^2)^3} \right)$$ be non-zero. To determine the Reeb field $\xi$ associated with the dual form $X^\flat$, we use the two fundamental conditions characterizing $\xi$: \begin{enumerate}
    \item $X^\flat(\xi) = 1,$
\item $\iota_\xi \mathrm{d}X^\flat =0.$
\end{enumerate}
Let $\xi$ be a generic vector field on the manifold $\mathbb{D}^2\times \R$, expressed in the coordinate basis:
$$\xi = \xi^r \frac{\partial}{\partial r} + \xi^\theta \frac{\partial}{\partial \theta} + \xi^t \frac{\partial}{\partial t}.$$
Let $D(r) = \frac{-4c r^6 + 8c r^4 + 16c_1 r^3 + 16c r^2 - 4c}{r^2(1-r^2)^3}$. Let us compute the interior product (contraction) of $\xi$ with $\mathrm{d}X^\flat$:
$$\iota_\xi \mathrm{d}X^\flat = \iota_\xi \left( D(r) \, \mathrm{d}r \wedge \mathrm{d}\theta \right) = D(r) \left( \iota_\xi(\mathrm{d}r) \mathrm{d}\theta - \iota_\xi(\mathrm{d}\theta) \mathrm{d}r \right)$$
$$\iota_\xi \mathrm{d}\alpha = D(r) \left( \xi^r \mathrm{d}\theta - \xi^\theta \mathrm{d}r \right)$$

For this expression to vanish identically (since $D(r) \neq 0$ on the valid domain), the spatial components of $\xi$ must vanish:
$$\xi^r = 0 \quad \text{and} \quad \xi^\theta = 0.$$
Let us evaluate the action of the 1-form $X^\flat$ on the field $\xi = \xi^t \frac{\partial}{\partial t}$:
$$X^\flat(\xi) = A(r,\theta)\xi^r + B(r)\xi^\theta + C(t)\xi^t = 1$$

Substituting $\xi^r = 0$ and $\xi^\theta = 0$, and knowing that $C(t) = -t + b_0$:
$$(-t + b_0) \xi^t = 1$$

We deduce the temporal component:
$$\xi^t = \frac{1}{-t + b_0}.$$

The Reeb field $\xi$ associated with the 1-form $X^\flat$ is therefore:
$$\xi(r,\theta,t) = \frac{1}{-t + b_0} \frac{\partial}{\partial t}.$$
\end{proof}


\begin{thebibliography}{99}


\bibitem{Besse}
A.~L.~Besse,
\newblock \emph{Einstein Manifolds},
\newblock Springer-Verlag, Berlin, 1987.


\bibitem{BoussoNdiayeJDSGT2025}
A.~Bousso and A.~Ndiaye,
\newblock $\eta$-Ricci-Bourguignon soliton on the hyperbolic spaces,
\newblock \emph{Journal of Dynamical Systems and Geometric Theories}
(2026).
\newblock DOI:
\doi{10.47974/JDSGT-2025-09003}.

\bibitem{BoussoNdiaye2026Hn}
A.~Bousso and A.~Ndiaye,
\newblock Ricci solitons on the Poincaré upper half plane,
\newblock \emph{Honam Mathematical Journal}
\textbf{48} (2026), no.~2, 329--348.
\newblock DOI:
\doi{10.5831/HMJ.2026.48.2.329}.

\bibitem{BoussoNdiayeH2R2025}
A.~Bousso and A.~Ndiaye,
\newblock $h$-Ricci-Bourguignon solitons on the
$\mathbb{H}^2\times\mathbb{R}$ Lie group,
\newblock \emph{Global Journal of Advanced Research on
Classical and Modern Geometries}
\textbf{14} (2025), no.~2, 200--207.

\bibitem{BoussoNdiayeSol3}
A.~Bousso and A.~Ndiaye,
\newblock Ricci--Yamabe solitons on the Lie group
$\mathrm{Sol}_3$,
\newblock \emph{Journal of Universal Mathematics}
\textbf{9} (2026), no.~1, 36--45.
\newblock DOI:
\doi{10.33773/jum.1906429}.


\bibitem{Chow1992}
B.~Chow,
\newblock The Yamabe flow on locally conformally flat manifolds with positive Ricci curvature,
\newblock \emph{Comm. Pure Appl. Math.}
\textbf{45} (1992), no.~8, 1003--1014.

\bibitem{DiopBoussoNdiayeMandal2026}
M.~N.~Diop, A.~Bousso, A.~Ndiaye and A.~Mandal,
\newblock $(h,\eta)$-Ricci-Bourguignon soliton on the
Poincaré Disk $D^2$,
\newblock \emph{Konuralp Journal of Mathematics}
\textbf{14} (2026), no.~1.

\bibitem{doCarmo}
M.~P.~do Carmo,
\newblock \emph{Riemannian Geometry},
\newblock Birkhäuser, Boston, 1992.


\bibitem{Hamilton}
R.~S.~Hamilton,
\newblock The Ricci flow on surfaces,
\newblock \emph{Mathematics and General Relativity},
\newblock Contemporary Mathematics \textbf{71} (1988), 237--262.
\bibitem{Lee}
J.~M.~Lee,
\newblock \emph{Riemannian Manifolds: An Introduction to Curvature},
\newblock Graduate Texts in Mathematics, Vol.~176,
\newblock Springer, New York, 1997.
\bibitem{ONeill}
B.~O'Neill,
\newblock \emph{Semi-Riemannian Geometry with Applications to Relativity},
\newblock Academic Press, New York, 1983.
\bibitem{Thurston1997}
W.~P.~Thurston,
\newblock \emph{Three-Dimensional Geometry and Topology},
\newblock Vol.~1, Princeton University Press, Princeton, 1997.
\end{thebibliography}
\end{document}